\tikzstyle{vertex}=[circle,draw=black,fill=black,inner sep=0,minimum size=3pt,text=white,font=\footnotesize]
\begin{document}
	
\title{Coloring Hasse diagrams and disjointness graphs of curves\thanks{Research partially supported by Swiss National Science Foundation grants no. 200020-162884 and 200021-175977.}}

\author{J\'anos Pach \inst{1,2}\orcidID{0000-0001-8344-3592}\and
	Istv\'an Tomon \inst{1}\orcidID{0000-0002-2389-2035}
}

\institute{\'{E}cole Polytechnique F\'{e}d\'{e}rale de Lausanne, Lausanne, Switzerland \and R\'enyi Institute, Budapest, Hungary
	\email{\{janos.pach,istvan.tomon\}@epfl.ch}}

\maketitle

\begin{abstract}
	Given a family of curves $\mathcal{C}$ in the plane, its disjointness graph is the graph whose vertices correspond to the elements of $\mathcal{C}$, and two vertices are joined by an edge if and only if the corresponding sets are disjoint. We prove that for every positive integer $r$ and $n$, there exists a family of $n$ curves whose disjointness graph has girth $r$ and chromatic number $\Omega(\frac{1}{r}\log n)$. In the process we slightly improve Bollob\'as's old result on Hasse diagrams and show that our improved bound is best possible for uniquely generated partial orders.
	
	\keywords{String graph \and Hasse diagram \and Chromatic number}
\end{abstract}

\section{Introduction}

There are two important, seemingly unrelated, concepts that play important roles in Geometric Graph Theory and in Graph Drawing: {\em Hasse diagrams} and {\em string graphs}.

Hasse diagrams were introduced by Vogt~\cite{V95} at the end of the 19th century for concise representation of partial orders. Today they are widely used in graph drawing algorithms. Let $P$ be a partially ordered set with partial ordering $\prec$. For any $x,y\in P$, we say that $y$ \emph{covers} $x$ if $x\prec y$ and there is no $z\in P$ such that $x\prec z\prec y$. The \emph{Hasse diagram} of $P$ is the directed graph on the elements of $P$, where there is an edge from $x$ to $y$ if and only if $y$ covers $x$. If we disregard the direction of the edges, we obtain the {\em cover graph} of $P$. The graph on $P$ whose two elements are connected by an edge if and only if they are related by $\prec$ is the {\em comparability graph} of $P$. The cover graph is a subgraph of the comparability graph.

The \emph{intersection graph} of a family of sets $\mathcal{C}$ is the graph whose vertices correspond to the elements of $\mathcal{C}$ and two vertices are joined by an edge if and only if the corresponding sets have a nonempty intersection. The \emph{disjointness graph} of $\mathcal{C}$ is the complement of the intersection graph of $\mathcal{C}$. A {\em string}, or \emph{curve}, $\gamma$ is the image of a continuous function $f:[0,1]\rightarrow \mathbb{R}^{2}$. A curve $\gamma$ is \emph{grounded} if one of its endpoints is on the $y$-axis, and $\gamma$ lies in the nonnegative half-plane. See Figure \ref{figure1} for an illustration of a grounded family of curves and its disjointness graph.   A {\em string graph} is the intersection graph of curves. The notion was introduced by Benzer~\cite{B59} and Sinden~\cite{Si66} to describe the incidence structures of intervals in chromosomes and metallic layers in printed networks, respectively. The systematic study of string graphs was initiated in \cite{EET76} and \cite{Gr78}.
\smallskip

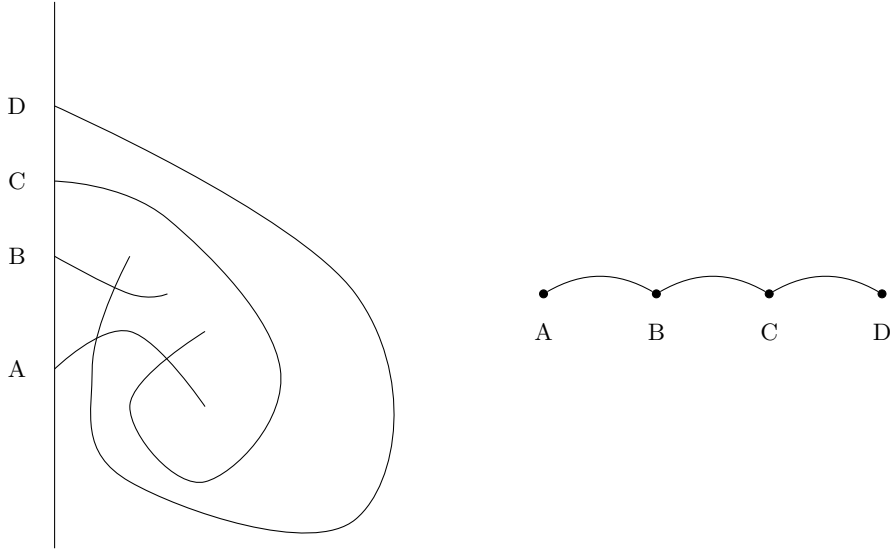
\begin{figure}
	\begin{center}
		\begin{tikzpicture}
		
		\node (v1) at (0,6) {};
		\node (v2) at (0,-1.5) {};
		\draw  (v1) edge (v2);
		\node at (-0.5,1) {A};
		\node at (-0.5,2.5) {B};
		\node at (-0.5,3.5) {C};
		\node at (-0.5,4.5) {D};
		\draw  plot[smooth, tension=.7] coordinates {(0,1) (1,1.5) (2,0.5)};
		\draw  plot[smooth, tension=.7] coordinates {(0,2.5) (1,2) (1.5,2)};
		\draw  plot[smooth, tension=.7] coordinates {(0,3.5) (1.5,3) (3,1) (2,-0.5) (1,0.5) (2,1.5)};
		\draw  plot[smooth, tension=.7] coordinates {(0,4.5) (4,2) (4,-1) (1,-0.5) (0.5,1) (1,2.5)};
		\node[vertex] (v3) at (6.5,2) {};
		\node[vertex] (v4) at (8,2) {};
		\node[vertex] (v5) at (9.5,2) {};
		\node[vertex] (v6) at (11,2) {};
		\draw  (v3) edge[bend left] (v4);
		\draw  (v4) edge[bend left] (v5);
		\draw  (v5) edge[bend left] (v6);
		\node  at (6.5,1.5) {A};
		\node  at (8,1.5) {B};
		\node  at (9.5,1.5) {C};
		\node  at (11,1.5) {D};
		\end{tikzpicture}
		\caption{A family of grounded curves and its disjointness graph.  }
		\label{figure1}
	\end{center}
\end{figure}

The first sign that the above concepts are intimately related was the following simple fact discovered by Golumbic, Rotem, Urrutia~\cite{GoRU83}, and Lov\'asz~\cite{Lo83}: Every comparability graph is the disjointness graph of a collection of curves in the plane. A partial converse of this statement was established in~\cite{FoP12}.

A useful characterization of cover graphs in terms of strings follows directly from Corollary 2.7 of Middendorf and Pfeiffer~\cite{MiP93} and Theorem 1 in~\cite{Si66}. See also~\cite{Kr91} and~\cite{RoW18} (page 2).

\begin{theorem}\label{character} \cite{MiP93}, \cite{Si66}  A triangle-free graph is a cover graph of a partially ordered set if and only if it is isomorphic to the disjointness graph of a family of grounded curves.
\end{theorem}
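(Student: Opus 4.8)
The plan is to prove the two implications separately, after recasting the property of being a cover graph in combinatorial terms. Recall that a graph $G$ is the cover graph of a poset if and only if it admits an acyclic orientation with no \emph{transitive edge}, that is, no oriented edge from $u$ to $v$ for which there is a second directed path from $u$ to $v$. Indeed, given such an orientation the transitive closure is a partial order whose covering pairs are exactly the edges of $G$, and the Hasse diagram of any poset is such an orientation of its cover graph. I will use one consequence repeatedly: a directed path of length two from $u$ to $v$ together with the edge from $u$ to $v$ spans a triangle, so a triangle-free graph has no transitive edge arising from a path of length two, in any acyclic orientation. Hence, for triangle-free graphs, only transitive edges coming from directed paths of length at least three can obstruct being a cover graph.

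For the forward implication, let $G$ be the cover graph of a poset $P$; note that $G$ is automatically triangle-free, since in the acyclic orientation induced by $\prec$ every triangle would contain a transitive edge. Fix a linear extension $x_1 \prec \cdots \prec x_n$ and place the grounding point of $\gamma_i$ at height $i$ on the $y$-axis. I would construct the curves in the order $i = 1, \dots, n$, routing each $\gamma_i$ into the right half-plane so that it meets $\gamma_j$ for every $j < i$ with $x_j$ not covered by $x_i$, and is disjoint from $\gamma_j$ for every lower cover $x_j$ of $x_i$. The lower covers of $x_i$ form an antichain, hence are pairwise incomparable; because we design all curves at once and may let earlier curves reach far to the right, leaving corridors, a new curve can always be threaded so as to avoid precisely the lower covers while still touching every other earlier curve. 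The disjointness graph of the resulting family is then exactly $G$.

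For the converse, let $\gamma_1, \dots, \gamma_n$ be grounded curves whose disjointness graph $G$ is triangle-free; the task is to produce an acyclic orientation of $G$ with no transitive edge. The obvious candidate---orient every disjoint pair from the lower to the higher grounding point---does \emph{not} work. One can realize the cycle $C_4$ (the cover graph of the diamond, the four-element poset with a bottom, a top, and two incomparable middle elements) by grounded curves whose four disjoint pairs are the three pairs of consecutive grounding points and the pair of extreme, lowest and highest, grounding points; the grounding-height orientation then makes the extreme edge transitive, since it is also spanned by the directed path of length three through the intermediate curves. Thus the orientation must instead be read off from the topology of the arrangement, and not from the grounding heights alone.

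This is exactly where the main obstacle lies. Unlike the length-two case, which triangle-freeness rules out for free, a directed path of length at least three between two disjoint curves cannot be excluded by a counting or forbidden-subgraph argument: the diamond shows that such a configuration is perfectly consistent with triangle-freeness, and there the remedy is to reverse one edge rather than to derive a contradiction. The heart of the proof is therefore to extract from the grounded drawing a genuinely geometric order on the curves---governed by how the arcs nest and enclose one another in the half-plane, using that each crossing pair of grounded curves bounds a Jordan region together with the segment of the $y$-axis between their grounding points---and to show that orienting the disjoint pairs by this order yields an acyclic orientation in which every edge is a covering pair. Verifying that this topological order has no transitive edge, with triangle-freeness disposing of the short paths and the planarity of the arrangement disposing of the long ones, is the crux of the argument; the two explicit constructions above are comparatively routine.
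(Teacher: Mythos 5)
First, a point of reference: the paper does not prove Theorem~\ref{character} at all; it assembles it from Corollary~2.7 of Middendorf--Pfeiffer~\cite{MiP93} and Theorem~1 of Sinden~\cite{Si66} (the acknowledgements mention a direct construction due to Walczak). So your attempt has to stand on its own, and as written it is a plan rather than a proof. Your reformulation of ``cover graph'' as ``graph admitting an acyclic orientation with no transitive edge'' is correct, and so is the remark that cover graphs are automatically triangle-free; but both implications then stop exactly at the hard step. In the forward direction, the assertion that a new curve ``can always be threaded so as to avoid precisely the lower covers while still touching every other earlier curve'' is the entire content of that implication: a curve that must be avoided can, together with the $y$-axis, bound a region that imprisons curves that must be met, so one needs an explicit construction with an invariant guaranteeing that every curve to be intersected stays accessible, and you supply none. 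In the converse direction you never define the ``genuinely geometric order'' on the curves, and you state yourself that verifying it has no transitive edge is ``the crux of the argument''; announcing the crux is not proving it.

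Moreover, the one concrete claim you make---the $C_4$ example used to dismiss the grounding-height orientation---is false. Suppose $\gamma_1,\gamma_2,\gamma_3,\gamma_4$ are grounded at heights $h_1<h_2<h_3<h_4$ and the disjoint pairs are exactly $\{1,2\},\{2,3\},\{3,4\},\{1,4\}$. Then $\gamma_1\cap\gamma_3\neq\emptyset$ and $\gamma_2\cap\gamma_4\neq\emptyset$ are forced. Since $\gamma_1\cup\gamma_3$ is connected, it contains an arc $P$ from $(0,h_1)$ to $(0,h_3)$; after the standard reduction to a representation in which each curve meets the $y$-axis only at its grounding point, $P$ together with the segment $S$ of the $y$-axis between $(0,h_1)$ and $(0,h_3)$ is a Jordan curve $J$. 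The open left half-plane lies in the unbounded component of the complement of $J$, so every point of the right half-plane sufficiently close to the interior of $S$ lies in the bounded component. Now $\gamma_2$ starts on $S$, avoids $P$, and cannot enter the left half-plane, so it is confined to the union of the bounded component with $S$; while $\gamma_4$ starts at $(0,h_4)$, which lies in the unbounded component, avoids $P$, and can never reach the interior of $S$ (it would have to approach it from the bounded side), so $\gamma_4$ stays in the unbounded component. Hence $\gamma_2\cap\gamma_4=\emptyset$, a contradiction. So your reason for rejecting the natural orientation---orient every disjoint pair upward by grounding height---evaporates; in fact, the Jordan-curve argument above is precisely the length-three case of the key lemma such an approach requires (no edge is spanned by a height-monotone disjointness path of length at least two; length two is triangle-freeness), and extending it to longer monotone paths is the genuine, and in your write-up entirely missing, content of the converse.
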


The {\em girth} of a graph $G$ is the length of the shortest cycle in $G$. Obviously, every triangle-free graph has girth at least {\em four}. According to a classical result of Erd\H os~\cite{Er59}, for every $r\ge 3$, there exist graphs with $n$ vertices and girth at least $r$ which have arbitrarily large chromatic numbers. Erd\H os's construction is probabilistic and does not posses any geometric structure.

For geometrically defined graphs, the situation is more complicated. The chromatic number of intersection graphs of axis-parallel rectangles~\cite{AsG60} or chords of a cycle~\cite{Gy85,KoK97,DaM19} and disjointness graphs of segments in the plane~\cite{PaT94,To00} can be bounded from above by a function of their clique numbers. In sharp contrast to this, Pawlik, Kozik, Krawczyk, Laso\'n, Micek, Trotter, and  Walczak~\cite{PaKK14} proved that there exist triangle-free intersection graphs of $n$ segments with chromatic number $\Omega(\log \log n)$, which disproved a longstanding conjecture of Erd\H{o}s. In~\cite{PaTT17}, triangle-free disjointness graphs of $n$ curves were constructed, with chromatic number $\Omega(\log n)$, cf.~\cite{MuWW18}. This construction is based on shift graphs, defined by Erd\H os and Hajnal~\cite{ErH64}. It appears to be difficult to extend this method to obtain disjointness graphs of curves with high girth and high chromatic number.

The aim of the present note is to construct such graphs.

\begin{theorem}\label{thm:main1}
For every positive integer $r$ and for every sufficiently large $n$, there exists a family of $n$ curves whose disjointness graph has girth at least $r$ and chromatic number at least $\Omega(\frac{1}{r}\log n)$.
\end{theorem}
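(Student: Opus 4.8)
The plan is to separate the geometry from the combinatorics. Since every graph of girth at least $r\ge 4$ is triangle-free, Theorem~\ref{character} guarantees that any triangle-free graph which happens to be the cover graph of some partial order is automatically isomorphic to the disjointness graph of a family of grounded curves. Hence it suffices to establish the purely order-theoretic core statement: for all integers $r\ge 4$ and $k$ there is a partially ordered set $P$ on at most $2^{O(rk)}$ elements whose cover graph has girth at least $r$ and chromatic number at least $k$. Given such a family of posets, for large $n$ I would pick $k=\Theta(\tfrac1r\log n)$, take the corresponding $P$ (which then has at most $n$ elements), pad it with pairwise incomparable elements to reach exactly $n$ — this affects neither the girth nor the chromatic number — and feed its cover graph into Theorem~\ref{character}. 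The case $r<4$ reduces to $r=4$, since a graph of girth at least $4$ has girth at least $r$. All the difficulty is thus concentrated in the construction of $P$, which is exactly the quantitative sharpening of Bollob\'as's theorem advertised in the abstract.

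To build $P$ I would argue by recursion on the target chromatic number $k$, the point being that the size bound $2^{O(rk)}$ asks each additional unit of chromatic number to cost only a factor $2^{O(r)}$ in the number of elements. Beginning with the one-element poset (cover graph $K_1$, chromatic number $1$, infinite girth), I would pass from a poset $P_k$ whose cover graph has chromatic number $k$ and girth at least $r$ to a poset $P_{k+1}$ with chromatic number $k+1$ and girth still at least $r$ by a Mycielski/Zykov-type amplification: take several disjoint copies of $P_k$, introduce a fresh layer of elements, and adjoin covering relations between the new and old elements so that in every proper colouring of the resulting cover graph some new element is forced onto a $(k+1)$st colour. To keep the girth above $r$, the gadget linking the copies must force any cycle passing through it to traverse a chain of length $\Theta(r)$; it is exactly the need for these long, branching chains that inflates the per-step cost to $2^{\Theta(r)}$ and thereby produces the factor $\tfrac1r$ in the final bound.

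The main obstacle — and the reason posets are genuinely harder than graphs here — is the non-local nature of the cover relation: a comparable pair $x\prec y$ is an edge of the Hasse diagram only if the open interval between $x$ and $y$ is empty. Consequently every relation introduced to raise the chromatic number threatens to destroy an intended cover (by slipping an element strictly between its endpoints) or to create an unintended one (a transitive shortcut that becomes covering), and either mishap can manufacture a short cycle and ruin the girth. The crux of the whole argument is therefore to design the amplification gadget so that the betweenness structure is fully under control: the comparabilities of the newly added elements must be arranged so that, for every intended covering pair, the open interval between its endpoints is provably empty, while every genuinely new cover is forced to sit on a chain of length at least $r$. Establishing these two properties at once, and confirming that the number of elements grows by only a factor $2^{O(r)}$ per step, is where the real work lies; granting them, the induction yields the required poset and the reduction of the first paragraph finishes the theorem.
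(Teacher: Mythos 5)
Your opening reduction is correct and coincides with the paper's: since girth $\geq r\geq 4$ implies triangle-freeness, Theorem~\ref{character} reduces everything to producing, for each $r$ and $k$, a poset on at most $2^{O(rk)}$ elements whose cover graph has girth at least $r$ and chromatic number at least $k$ (this is exactly Theorem~\ref{thm:chi_cover}), and padding with incomparable elements is indeed harmless. But from that point on your argument is a wish-list rather than a proof: the amplification gadget --- the one object that would make your induction go --- is never constructed. You state the properties it must have (every proper colouring forces a new colour; every cycle through the gadget is long; intended covering pairs keep empty open intervals while no transitive shortcut creates or destroys covers; the size grows by only $2^{O(r)}$ per step) and then write ``granting them''. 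Those properties \emph{are} the content of the theorem; deferring them leaves nothing proved.

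There are also concrete reasons to doubt that a Mycielski/Zykov-type scheme can supply such a gadget. Generalized Mycielskians preserve only the \emph{odd} girth: if the base graph contains a path $uvw$, the cone construction creates the $4$-cycle $u^{1}v^{2}w^{1}v^{0}$, so girth at least $5$ is already unattainable this way, independently of any poset considerations. The known deterministic girth-preserving amplifications (Ne\v set\v ril--R\"odl, K\v r\'{\i}\v z--Ne\v set\v ril) blow up the number of vertices much faster than $2^{O(r)}$ per unit of chromatic number, which is precisely why the previous record, Bollob\'as's $\Omega(\log n/\log\log n)$, came from a probabilistic argument and why the paper's improvement is probabilistic as well. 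The paper does not iterate at all: it makes a single global random construction --- the vertex set $\{1,\dots,N\}$ is split into $k=\frac{\log_{2}N}{10r}$ blocks $A_{1},\dots,A_{k}$ of size $m$, and $x\in A_{i}$, $y\in A_{j}$ ($i<j$) are joined with probability $2^{j-i}/m$ --- orders the vertices by the existence of monotone paths, and controls the cover relation not by interval-emptiness gadgets but by showing that with positive probability the graph has no independent set of size exceeding $7m$, few ``bad pairs'' (pairs joined by two edge-disjoint monotone paths), and few short cycles; deleting one vertex from each bad pair and each short cycle then makes every edge a cover relation and makes the poset uniquely generated. To rescue your outline you would have to exhibit the gadget explicitly and prove all of its claimed properties, and the obstructions above indicate this is not a routine matter but a genuinely open design problem.
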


This result does not remain true if we are allowed to use only {\em $x$-monotone} curves, that is, if every vertical line meets each curve in at most one point. In this case, the chromatic number of the cover graph is bounded from above by a constant~\cite{PaT94,PaT19}.

In view of Theorem \ref{character}, in order to prove Theorem~\ref{thm:main1}, it is sufficient to establish the following.

\begin{theorem}\label{thm:chi_cover}
For every positive integer $r$ and for every sufficiently large $n$, there exists a poset on $n$ vertices whose cover graph has girth at least $r$ and chromatic number $\Omega(\frac{1}{r}\log n)$.
\end{theorem}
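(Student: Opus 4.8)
The statement is about posets directly, so the plan is to construct, for every $r$ and every $k$, a poset $P=P(r,k)$ on $n\le 2^{O(rk)}$ elements whose cover graph has girth at least $r$ and chromatic number at least $k$; solving for $k$ then yields $\chi\ge k=\Omega(\tfrac1r\log n)$, which is exactly Theorem~\ref{thm:chi_cover}. (Theorem~\ref{character} is not needed here; it is only used afterwards to deduce Theorem~\ref{thm:main1} from Theorem~\ref{thm:chi_cover}.)

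\textbf{Guiding example.} First I would recall why logarithmic chromatic number is attainable for cover graphs at all. The \emph{shift graph} on $[N]$, with vertex set $\{(i,j):1\le i<j\le N\}$ and $(i,j)$ adjacent to $(j,k)$, is a cover graph: declaring $(i,j)\lessdot(j,k)$ and passing to the transitive closure gives a partial order, since the entries strictly increase along every chain and so no cycle can form; moreover the \emph{only} covers are the declared shift edges, because any element lying strictly between $(i,j)$ and $(j,k)$ would have to start at first coordinate $\ge j$ yet reach second coordinate $j$, which is impossible. This graph has $\binom N2$ vertices and chromatic number $\ge\log_2 N$, but only girth $4$. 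The entire difficulty is to raise the girth to $r$ while paying only a factor $r$ in the chromatic number and retaining the cover-graph property.

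\textbf{Chromatic lower bound (the easy half).} For whatever skeleton I settle on, I would prove the chromatic bound by the Erd\H os--Hajnal subset-distinguishing argument. In the shift graph this reads: given a proper colouring $c$ with colours in $[C]$, attach to each $i\in[N]$ the set $S_i=\{c(i,j):j>i\}\subseteq[C]$; if $i<i'$ then $c(i,i')\in S_i$, whereas $c(i,i')\notin S_{i'}$ because $(i,i')$ is adjacent to every $(i',k)$, so $i\mapsto S_i$ is injective and $C\ge\log_2 N$. I expect the final construction to carry an analogous family of $n^{\Omega(1/r)}$ ``tags,'' each forced into a distinct subset of the colour set, which gives $C\ge\Omega(\tfrac1r\log n)$. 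This part should be routine once the combinatorial skeleton is fixed.

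\textbf{Girth amplification (the main obstacle).} The heart of the proof is to destroy all cycles of length $<r$ without collapsing the chromatic number. My plan is a \emph{layered, iterated shift} construction: work with $\Theta(r)$ levels and define cover relations only between consecutive levels, imposing a spacing or ``gap'' condition on the coordinates so that any closed walk in the cover graph is forced to traverse many levels before it can return to its start. Concretely I would attach to each element a real potential, built from its coordinates and its level, that changes in a prescribed, sign-controlled fashion along every cover edge, and argue that a short closed walk cannot restore the potential to its initial value; the same monovariant should simultaneously certify, as in the guiding example, that the declared covers are the \emph{only} covers. The two delicate points I anticipate are (i) verifying that the declared relation genuinely is a Hasse diagram---the notorious subtlety that adding a cover edge may create an unintended comparability, producing either an illegal short cycle or a transitive shortcut that shortens a longer one---and (ii) getting the quantitative trade-off right, so that forcing girth $\ge r$ costs only a factor $2^{O(r)}$ in the number of elements per unit of chromatic number. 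With both in hand, choosing parameters so that $n=2^{\Theta(rk)}$ produces a poset on $n$ elements whose cover graph has girth $\ge r$ and chromatic number $\ge k=\Omega(\tfrac1r\log n)$; this is Theorem~\ref{thm:chi_cover}, it sharpens Bollob\'as's estimate, and the same skeleton is what one would pit against the matching upper bound for uniquely generated posets.
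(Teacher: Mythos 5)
Your proposal is a plan, not a proof: the entire difficulty of the theorem is concentrated in your ``girth amplification'' step, and there you only describe what you \emph{would} do (a layered iterated-shift construction, a potential function controlling closed walks) and explicitly list as ``anticipated delicate points'' exactly the two things that must actually be proved --- that the declared relation is genuinely the full cover relation, and that the girth-versus-chromatic-number trade-off costs only a factor $2^{O(r)}$. Neither is resolved, and no concrete construction is given, so the statement is not proved. Moreover, the concrete ingredients you do commit to point in a direction that is unlikely to work. Your chromatic lower bound is the Erd\H os--Hajnal subset-distinguishing argument, which hinges on the vertex $(i,i')$ being adjacent to \emph{every} $(i',k)$; together with a second vertex $(j,i')$ this produces complete bipartite patterns $K_{2,2}$, i.e.\ $4$-cycles, in abundance. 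In any graph of girth at least $5$ two vertices have at most one common neighbour, so this ``tag'' mechanism cannot survive girth amplification in any direct form; a genuinely different lower-bound argument is needed. Also, no deterministic layered construction matching the bound $\Omega(\frac{1}{r}\log n)$ is known --- the deterministic constructions in the literature (Ne\v set\v ril--R\"odl, K\v r\'{\i}\v z--Ne\v set\v ril) give substantially weaker bounds.

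For comparison, the paper's proof is probabilistic and bounds the \emph{independence number} rather than running a colouring argument. It partitions $\{1,\dots,N\}$ into $k=\frac{\log_2 N}{10r}$ intervals $A_1,\dots,A_k$ of size $m$ and joins $x\in A_i$ to $y\in A_j$ ($i<j$) independently with the graded probability $p_{ij}=2^{j-i}/m$. A union bound (Lemma~\ref{lemma:bipartite}) shows that with high probability every independent set has size at most $7m$, giving $\chi\geq n/(7m)=\Omega(\frac1r\log n)$. The poset is then defined exactly in the spirit of your shift-graph example --- $a<_P b$ iff $a<b$ and there is a monotone path from $a$ to $b$ --- but your ``delicate point (i)'' is handled by the alteration method, not by a monovariant: one computes that the expected number of \emph{bad pairs} (pairs joined by two edge-disjoint monotone paths) and of cycles shorter than $r$ are both $O(N)$, and deletes one vertex from each. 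In the surviving graph $G'$ every comparability is witnessed by a unique monotone path, so the cover graph of $P$ is exactly $G'$ (and $P$ is uniquely generated), while the deletions also enforce girth at least $r$. If you want to complete your write-up, the probabilistic deletion argument is the missing engine; the deterministic route you sketch would require ideas not present in your proposal.
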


The study of combinatorial properties of cover graphs (Hasse diagrams) is an extensive area of research in the theory of partial orders. Bollob\'as~\cite{Bo77} was the first to show the existence of partial orders (actually, lattices) whose cover graphs have arbitrarily large girth and chromatic number. Alternative constructions were found by Ne\v set\v ril {\em et al.}~\cite{NeR79,KrN91}. Bollob\'as's proof, which gives the best known asymptotic bound, builds on Erd\H{o}s's probabilistic construction~\cite{Er59} mentioned above. It shows that for a fixed girth $r$ and $n\rightarrow\infty$, the chromatic number of a cover graph with $n$ vertices can be as large as $\Omega(\frac{\log n}{\log\log n})$. Our Theorem~\ref{thm:chi_cover} improves on this bound.

It is possible that Theorem~\ref{thm:chi_cover} can be further improved. However, we can show that our bound is tight for an interesting family of cover graphs. A partially ordered set $P$ is called \emph{uniquely generated} if for every comparable pair of vertices $x\prec y$, there exists a unique sequence of vertices $x=v_{1}\prec\dots\prec v_{k}=y$ such that $v_{i+1}$ covers $v_{i}$ for $i=1,\dots,k-1$. Obviously, if there is no chain with $3$ elements in $P$, then $P$ is uniquely generated and its cover graph is bipartite.

\begin{theorem}\label{thm:chi_ug}
	(i) If $P$ is a uniquely generated poset on $n$ vertices, then the chromatic number of its cover graph is at most $\lfloor \log_{2} n\rfloor+1$.
	
	(ii) For every integer $r>3$ and for every sufficiently large $n$, there exists a uniquely generated poset on $n$ vertices whose cover graph has girth at least $r$ and chromatic number at least $\Omega(\frac{1}{r}\log n)$.
\end{theorem}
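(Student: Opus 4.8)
The two parts need different ideas, so I would treat them separately. For part (i) the plan is to reformulate unique generation as one structural lemma and then exhibit an explicit greedy coloring. First I would record the key consequence: \emph{if $v$ covers two distinct elements $x\ne x'$, then their principal down-sets $D(x)=\{u:u\preceq x\}$ and $D(x')=\{u:u\preceq x'\}$ are disjoint.} Indeed, a common $u\in D(x)\cap D(x')$ would give two distinct saturated chains from $u$ to $v$, one through $x$ and one through $x'$ (saturate $u\preceq x$ and $u\preceq x'$ and append $v$), contradicting unique generation. Since every element strictly below $v$ lies below some element covered by $v$, this yields the disjoint decomposition
\[
D(v)=\{v\}\ \cup\ \bigcup_{x\,\lessdot\,v} D(x),\qquad\text{so}\qquad |D(v)|=1+\sum_{x\,\lessdot\,v}|D(x)| .
\]

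With this in hand I would color the cover graph greedily from the bottom up: process the elements in any linear extension of $\prec$ and give each $v$ the color $c(v)$ equal to the smallest non-negative integer not used on any element covered by $v$. This is proper by construction, since every edge of the cover graph joins some $v$ to an element it covers, so only the size of the palette needs bounding. Here the disjointness lemma does the work: I claim $c(v)\le\lfloor\log_2|D(v)|\rfloor$ for all $v$, by induction along the linear extension. If $c(v)=j$, then for each $t\in\{0,\dots,j-1\}$ some $x^{(t)}\lessdot v$ has $c(x^{(t)})=t$; these $x^{(t)}$ are distinct, hence have disjoint down-sets, and by induction $|D(x^{(t)})|\ge 2^{t}$. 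Summing,
\[
|D(v)|\ \ge\ 1+\sum_{t=0}^{j-1}|D(x^{(t)})|\ \ge\ 1+\sum_{t=0}^{j-1}2^{t}\ =\ 2^{j},
\]
so $j\le\log_2|D(v)|\le\log_2 n$. Thus every color lies in $\{0,1,\dots,\lfloor\log_2 n\rfloor\}$, proving (i). There is no real obstacle beyond spotting the disjointness lemma; the rest is a clean counting induction.

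For part (ii) the plan is to build the extremal poset by a recursion that maintains three invariants at once: (a) unique generation, in the strong form that distinct elements covered by a common element have disjoint down-sets; (b) girth at least $r$ of the cover graph; and (c) chromatic number growing by a fixed amount per stage. Starting from the poset supplied by Theorem~\ref{thm:chi_cover}, which already realizes girth $\ge r$ and chromatic number $\Omega(\tfrac1r\log n)$, I would rebuild it stage by stage: at each stage one takes many disjoint copies of the current poset and glues them only through freshly introduced elements, attaching these elements along chains that are internally disjoint and long, of length $\Theta(r)$. Invariant (a) is preserved because each new gluing element sits above copies whose down-sets are, by construction, pairwise disjoint; invariant (b) is preserved because any cycle must traverse at least two of the long attaching chains and so has length $\ge r$; and invariant (c) is the chromatic-forcing step inherited from Theorem~\ref{thm:chi_cover}. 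Tracking parameters, each unit of chromatic number multiplies the vertex count by a factor $2^{\Theta(r)}$, so a poset with $n$ vertices attains chromatic number $\Omega(\tfrac1r\log n)$.

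I expect the genuine difficulty of (ii) to be reconciling unique generation with the chromatic lower bound. Unique generation forbids two saturated chains sharing both endpoints, that is, precisely the ``parallel routes'' on which most chromatic-forcing gadgets rely; the role of the long attaching chains is to route the forcing through pairwise down-set-disjoint copies so that no interval stops being a chain, while their length $\Theta(r)$ is what simultaneously rules out short cycles and is responsible for the loss of the factor $\tfrac1r$ relative to part (i). Verifying that the chromatic lower bound of Theorem~\ref{thm:chi_cover} survives this more rigid gluing --- equivalently, that the down-set-disjointness constraint of (a) can be imposed without destroying (c) --- is the step I would expect to require the most care.
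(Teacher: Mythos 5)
Your part~(i) is correct and is essentially the paper's own argument: the same greedy coloring along a linear extension of the partial order, with the same induction showing that a vertex of color $k$ has a down-set of size at least $2^{k-1}$. The paper phrases the disjointness of the down-sets of the covered elements via the observation that unique generation makes each down-set induce a tree, but the content is identical to your disjointness lemma.

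Part~(ii), however, has a fatal structural problem, in two layers. The first is circularity: you start ``from the poset supplied by Theorem~\ref{thm:chi_cover}'', but Theorem~\ref{thm:chi_cover} is not proved independently anywhere --- the paper explicitly derives it as an immediate consequence of part~(ii) of Theorem~\ref{thm:chi_ug}, i.e., of the very statement you are proving. Nor is there an external source you could substitute: the best previously known bound for cover graphs of girth $r$ (Bollob\'as) is $\Omega(\frac{\log n}{\log \log n})$, strictly weaker than $\Omega(\frac{1}{r}\log n)$, and that construction is not claimed to be uniquely generated either. So your starting object does not exist until the theorem is proved.

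The second layer is that, even granting a starting poset, the proposed gluing cannot maintain invariant~(c). If disjoint copies of a poset are joined only through freshly introduced elements attached by internally disjoint chains of length $\Theta(r)\geq 2$, then the resulting cover graph admits a proper coloring with $\max(\chi_0,3)$ colors, where $\chi_0$ is the chromatic number of a single copy: color each copy optimally, give every new gluing element color $1$, and color the internal chain vertices greedily (each has only two neighbors, so three colors always suffice for them). Hence the chromatic number does not grow ``by a fixed amount per stage''; gluing through chains and cut-like structures is exactly the kind of operation chromatic number ignores. This is why all known recursive amalgamation constructions for high girth and high chromatic number (Descartes, Ne\v set\v ril--R\"odl, K\v r\'{\i}\v z) require a far more violent forcing step, with an exponential-or-worse vertex blowup per unit of chromatic number --- not your claimed $2^{\Theta(r)}$ --- which is precisely why they yield bounds much weaker than $\frac{1}{r}\log n$. (Under the alternative reading, where the chromatic number is merely inherited from the copies rather than grown, the gluing does nothing to repair failures of unique generation \emph{inside} the copies, which is where they occur.) The paper avoids both issues with a direct probabilistic construction: partition $\{1,\dots,N\}$ into $k=\frac{\log_2 N}{10r}$ intervals of size $m=N/k$, join vertices of the $i$-th and $j$-th intervals independently with probability $2^{j-i}/m$, and show that with positive probability the graph has independence number at most $7m$, few pairs joined by two edge-disjoint monotone paths, and few short cycles; after deleting one vertex from each such pair and each short cycle, monotone-path reachability defines a uniquely generated poset whose cover graph is the remaining graph itself. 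There, unique generation and girth are bought by deletion, and the chromatic bound comes from the independence number, not from any inductive forcing.
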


\section{Cover graphs with large chromatic number}

In this section, we prove Theorem \ref{thm:chi_ug}. Note that then Theorem \ref{thm:chi_cover} is an immediate consequence of part (ii) of Theorem \ref{thm:chi_ug}. We omit floors an ceilings for easier readability.

\bigskip

\textit{Proof of Theorem \ref{thm:chi_ug}, part (i).} Let $G$ be the cover graph of $P$, let $<_{P}$ be the partial ordering on $P$, and let $\prec$ be a linear extension of $<_{P}$. For any $x\in P$, let $C(x)$ denote the set of vertices of $P$ covered by $x$.
	
	We prove that the greedy coloring of $G$ with respect to $\prec$ uses at most $1+\lfloor \log_{2} n\rfloor$ colors. Let $v_{1}\prec\dots\prec v_{n}$ be the vertices of $G$. Color them  with the elements of $\mathbb{Z}^{+}$, as follows. For $i=1,\dots,n$, if $v_{1},\dots,v_{i-1}$ have already been colored, then color $v_{i}$ with the smallest positive integer $k$ that does not appear among the colors of $C(v_{i})$.
	
	For each vertex $v\in V(G)$, let $T(v)$ denote the set of vertices $u\in V(G)$ such that $u\leq_{P} v$. Note that, as $P$ is uniquely generated, the subgraph of $G$ induced by $T(v)$ is a tree. We claim that if $v$ received color $k$, then $|T(v)|\geq 2^{k-1}$. This clearly implies (i), because if the total number of colors used by our coloring is $K$, then we have $n\geq 2^{K-1}$.
	
	We prove the claim by induction on $k$. For $k=1$, the statement is trivial. Suppose that $k\geq 2$ and that the claim is true for all positive integers smaller than $k$. As $v$ received color $k$, we can find $k-1$ vertices $u_{1},\dots,u_{k-1}\in C(v)$ such that the color of $u_{i}$ is $i$, for $i=1,\dots,k-1$. By the induction hypothesis, we have  $|T(u_{i})|\geq 2^{i-1}$. Since the trees $T(u_{1}),\dots,T(u_{k-1})\subset T(v)$ are pairwise disjoint, we obtain $|T(v)|\geq 1+\sum_{i=1}^{k-1}2^{i-1}=2^{k-1}$, as required.\hfill$\Box$

\bigskip

For the proof of part (ii) of Theorem \ref{thm:chi_ug}, we need the following technical lemma.

\begin{lemma}\label{lemma:bipartite}
	Let $A$ and $B$ be two $m$-element sets and let $G$ be the random graph on $A\cup B$ in which every $a\in A$ and $b\in B$ are joined by an edge independently with probability $p=\frac{d}{m}$.

Then the probability that there exist $X\subset A$ and $Y\subset B$ such that $|X||Y|\geq 3m^{2}/d$ and there is no edge between $X$ and $Y$ is at most $2^{-m}$.
\end{lemma}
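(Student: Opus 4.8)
The plan is a straightforward first-moment (union bound) argument over all candidate ``empty rectangles''. For fixed subsets $X\subseteq A$ and $Y\subseteq B$ with $|X|=s$ and $|Y|=t$, the events that $ab$ is a \emph{non}-edge, ranging over $a\in X$ and $b\in Y$, are mutually independent, so the probability that there is no edge between $X$ and $Y$ is exactly $(1-p)^{st}\le e^{-pst}$. I would then sum this bound over all admissible pairs $(X,Y)$, grouping them according to their sizes $s$ and $t$.

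First I would count: the number of pairs $(X,Y)$ with $|X|=s$ and $|Y|=t$ is $\binom{m}{s}\binom{m}{t}\le 2^{2m}$. Next, the hypothesis $st=|X||Y|\ge 3m^{2}/d$ together with $p=d/m$ gives $pst\ge 3m$, so each term satisfies $(1-p)^{st}\le e^{-3m}$. Since the size pair $(s,t)$ ranges over at most $m^{2}$ possibilities (we may ignore $s=0$ or $t=0$, for which $st=0<3m^{2}/d$), I obtain
\[
\Pr[\exists\, X,Y]\ \le\ \sum_{s,t}\binom{m}{s}\binom{m}{t}(1-p)^{st}\ \le\ m^{2}\cdot 2^{2m}\cdot e^{-3m}.
\]

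The final step is to verify that this quantity is at most $2^{-m}$. Writing everything in base two, $2^{2m}e^{-3m}=2^{(2-3\log_{2}e)m}=2^{-cm}$ with $c=3\log_{2}e-2>2.3$, so the bound becomes $m^{2}2^{-cm}$, which falls below $2^{-m}$ as soon as $m^{2}\le 2^{(c-1)m}$, and this already holds for every $m\ge 1$. For the degenerate ranges of $d$ one checks directly that the statement is vacuous or trivial: if $d<3$ then $3m^{2}/d>m^{2}$ and no admissible rectangle exists, while if $d>m$ the probability $p$ exceeds $1$ and falls outside the intended regime, so we may assume $3\le d\le m$, which in particular guarantees $p\le 1$ and hence legitimizes the inequality $(1-p)^{st}\le e^{-pst}$.

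The only point that requires care is the balance of exponents: the genuine content is that the constant $3$ in the hypothesis $|X||Y|\ge 3m^{2}/d$ is chosen large enough that the decay factor $e^{-pst}\le e^{-3m}$ comfortably dominates the $2^{2m}$ entropy cost of specifying the two subsets, leaving exponential room to absorb the polynomial $m^{2}$ arising from the sum over sizes. I do not expect any real obstacle here; beyond tracking this exponent, one need only ensure $p\le 1$ so that the passage from $(1-p)^{st}$ to $e^{-pst}$ is valid.
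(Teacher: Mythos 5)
Your proof is correct and follows essentially the same route as the paper: a union bound over all pairs $(X,Y)$, bounding each empty-rectangle probability by $(1-p)^{|X||Y|}\le e^{-p|X||Y|}\le e^{-3m}$ and paying an entropy cost of $2^{2m}$ for the choice of subsets, with the exponent comparison $2^{2m}e^{-3m}<2^{-m}$ closing the argument. The only cosmetic difference is that you group pairs by their sizes and thus incur a harmless extra factor of $m^{2}$, which the paper avoids by summing over the at most $2^{2m}$ pairs of subsets directly.
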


\begin{proof}
	Let $N= \frac{3m^{2}}{d}$. For any $X\subset A$ and $Y\subset B$, let $I(X,Y)$ denote the event that there exists no edge between $X$ and $Y$. Obviously, we have $\mathbb{P}(I(X,Y))=(1-p)^{|X||Y|}\leq e^{-p|X||Y|}$. This yields
	$$\mathbb{P}\left(\bigcup_{\substack{X\subset A, Y\subset B\\ |X||Y|\geq N}}I(X,Y)\right)\leq \sum_{\substack{X\subset A, Y\subset B\\ |X||Y|\geq N}}e^{-p|X||Y|}\leq 2^{2m}e^{-pN}<2^{-m}.$$
	\hfill$\Box$
\end{proof}

\textit{Proof of Theorem \ref{thm:chi_ug}, part (ii).} Assume that $n\geq 2^{10r}$, and let $N=3n$, $k=\frac{\log_{2} N}{10r}$, and $m=\frac{N}{k}$. If $G$ is a graph whose vertex set is a subset of the integers, a \emph{monotone path} in $G$ is a path with vertices $c_{0}<c_{1}<\dots<c_{t}$ and edges $c_{i}c_{i+1}$ for $i=1,\dots,t-1$. A pair of vertices $\{a,b\}$ of $G$ is called \emph{bad}, if there exist two edge-disjoint monotone paths whose endpoints are $a$ and $b$.
	
Our goal is to construct a graph $G$ on the vertex set $\{1,...,N\}$ satisfying the following three conditions:
	 \begin{enumerate}
	  \item $G$ has no independent set of size larger than $7m$,
 	  \item $G$ has at most $\frac{N}{3}$ bad pairs of vertices,
	  \item the number of cycles in $G$ of length smaller than $r$ is at most $\frac{N}{3}$.	
     \end{enumerate}
Suppose that such a graph $G$ exists. Let $G'$ denote the graph obtained from $G$ by deleting $\frac{2N}{3}$ vertices: at least one vertex from every bad pair and at least one vertex from every cycle of length smaller than $r$. Then $G'$ has $n$ vertices and girth at least $r$. Condition 1 implies that the chromatic number of $G'$ is at least $\frac{n}{7m}>\frac{1}{10^{3}r}\log_{2}n$. Define a partially ordered set $P$ with partial ordering $<_{P}$ on $V(G')$ in such a way that $a<_{P} b$ if and only if $a<b$ and there exists a monotone path in $G'$ with endpoints $a$ and $b$. Then $P$ meets all the requirements of part (ii) of the theorem. Indeed, as $G'$ has no bad pair of vertices, the cover graph of $P$ is equal to $G'$, and $P$ is uniquely generated.
\smallskip

We construct a graph $G$ with the above three properties, as follows. Divide $\{1,\dots,N\}$ into $k$ intervals of size $m$, denoted by $A_{1},...,A_{k}$. For every $1\leq i<j\leq k$ and for any $x\in A_{i}$, $y\in A_{j}$, join $x$ and $y$ by an edge independently with probability $p_{ij}=\frac{2^{j-i}}{m}$. Denote the resulting graph by $G$.

First, we show that, with probability larger than $\frac{2}{3}$, condition 1 is satisfied: $G$ does not contain an independent set of size larger than $7m$.  Let $\mathcal{A}$ denote the event that for every pair $(i,j)$ with $1\leq i<j\leq k$, and for every pair of subsets $X\subset A_{i}$ and $Y\subset A_{j}$ with no edge running between $X$ and $Y$, we have $|X||Y|<3m^{2}2^{i-j}$. By Lemma \ref{lemma:bipartite}, for a fixed pair $(i,j)$ with $1\leq i<j\leq k$, with probability at least $1-2^{-m}$ there exists no $X\subset A_{i}$ and $Y\subset A_{j}$ such that $|X||Y|\geq 3m^{2}2^{i-j}$ and there is no edge between $X$ and $Y$. As there are fewer than $k^{2}$ different pairs $(i,j)$ with $1\leq i<j\leq k$,  we have $\mathbb{P}(\mathcal{A})\geq 1-k^{2}2^{-m}>\frac{2}{3}$.

We show that if $\mathcal{A}$ happens, then $G$ has no independent set of size larger than $7m$. Suppose for contradiction that $I\subset V(G)$ is an independent set with $|I|>7m$. For $i=1,...,k$, let $I_{i}=I\cap A_{i}$. Clearly, there exists an index $1\leq h\leq k$ such that $\sum_{i=1}^{h}|I_{i}|\geq 3m$ and $\sum_{i=h+1}^{k}|I_{i}|\geq 3m$. Then we have
   \begin{equation}\label{equ:ind}
   9m^{2}\leq \left(\sum_{i=1}^{h}|I_{i}|\right)\left(\sum_{i=h+1}^{k}|I_{i}|\right)=\sum_{i=1}^{h}\sum_{j=h+1}^{k}|I_{i}||I_{j}|\leq \sum_{i=1}^{h}\sum_{j=h+1}^{k}3m^{2}2^{i-j},
   \end{equation}
where the last inequality holds if $\mathcal{A}$ occurs. However,
   $$\sum_{i=1}^{h}\sum_{j=h+1}^{k}2^{i-j}\leq \sum_{l=1}^{k}l2^{-l}<2,$$
which contradicts the left-hand side of (\ref{equ:ind}).

Next, we prove that the probability that $G$ satisfies condition 2 is larger than $\frac{2}{3}$. Let $X$ stand for the number of bad pairs of vertices in $G$, and let $\mathcal{B}$ denote the event that $X \leq \frac{N}{3}$. Let $x\in A_{i}$ and $y\in A_{j}$, where $1\leq i<j\leq k$. Let $x=v_{0},v_{2},\dots,v_{l}=y$ such that $v_{t}\in A_{i_{t}}$ for $t=0,\dots,l$, where $i=i_{0}<\dots<i_{l}=j$. The probability that $v_{0},\dots,v_{l}$ is a monotone path in $G$ is
   $$\prod_{t=0}^{l-1}\frac{2^{i_{l+1}-i_{l}}}{m}=\frac{2^{j-i}}{m^l}<\frac{2^{k}}{m^{l}}.$$
There are $\binom{j-i-1}{l-1}m^{l-1}<2^{k}m^{l-1}$ ways to choose the vertices of a monotone path of length $l$ with endpoints $x$ and $y$. Hence, the probability that there exist two edge-disjoint monotone paths with endpoints $x$ and $y$, where one of these paths has length $l$ and the other has length $l'$, is at most
   $$(2^{k}m^{l-1})(2^{k}m^{l'-1})\frac{2^{k}}{m^{l}}\frac{2^{k}}{m^{l'}}=\frac{2^{4k}}{m^{2}}.$$
There are fewer than $k^{2}$ ways to choose $(l,l')$, so the probability that $\{x,y\}$ is a bad pair of vertices is less than $\frac{k^{2}2^{4k}}{m^{2}}<\frac{1}{9n}$. Therefore, we have $\mathbb{E}(X)< N^{2}\frac{1}{9N}=\frac{N}{9}$. Applying Markov's inequality, we obtain that $1-\mathbb{P}(\mathcal{B})=\mathbb{P}(X > \frac{N}{3}) < \frac{1}{3}$.

Finally, we show that $G$ satisfies condition 3, with probability larger than $\frac{2}{3}$. Let $Y$ be the number of cycles of length at most $r-1$ in $G$, and let $\mathcal{C}$ denote the event that $Y\leq \frac{N}{3}$. Let $p=n^{-(r-1)/r}$. Note that each pair of vertices in $G$ is joined by an edge with probability at most $\frac{2^{k}}{m}<p$. Then we have
   $$\mathbb{E}(Y)<\sum_{l=3}^{r-1}N^{l}p^{l}<rN^{\frac{r-1}{r}}<\frac{N}{9}.$$
Indeed, there are $\frac{(l-1)!}{2}\binom{N}{l}<N^{l}$ possible copies of the cycle of length $l$, and the probability that a fixed copy of such a cycle appears in $G$ is at most $p^{l}$. Applying Markov's inequality, we get $1-\mathbb{P}(\mathcal{C})=\mathbb{P}(Y > \frac{N}{3})<\frac{1}{3}$.
   	
In conclusion, we proved that $\mathbb{P}(\mathcal{A}),\mathbb{P}(\mathcal{B}),\mathbb{P}(\mathcal{C})>\frac{2}{3}$. Thus, the probability that the event $\mathcal{A}\wedge\mathcal{B}\wedge\mathcal{C}$ occurs is nonzero. This means that there exists a graph $G$ satisfying conditions 1,2, and 3, which completes the proof of the theorem.\hfill$\Box$

\section{Acknowledgements}
We are grateful to Bartosz Walczak for valuable discussions. He gave a direct construction proving Theorem~\ref{character} and pointed out where the components of the statement appeared in the literature.


\end{document}